\numberwithin{equation}{section}
\newcommand{\set}[1]{\mathbb{#1}}
\theoremstyle{plain}
\newtheorem{Th}{Theorem}[section]
\newtheorem{Lemma}[Th]{Lemma}
 \theoremstyle{definition}
\newtheorem{?}[Th]{Problem}
\newtheorem{Ex}[Th]{Example}
\newcommand{\w}[0]{\omega}
\newcommand{\G}[0]{\Gamma}
\newcommand{\C}{\mathbb{C}}
\renewcommand{\P}{\mathbb{P}}
\newcommand{\g}[0]{\mathfrak{g}}
\newcommand{\h}[0]{\mathfrak{h}}
\newcommand{\sll}[0]{\mathfrak{sl}}
\newcommand{\slln}[0]{\mathfrak{sl}(n)}
\newcommand{\gln}[0]{\mathfrak{gl}(n)}
\newcommand{\om}[0]{\mathcal{O}_{min}}
\begin{document}

\title{Hikita conjecture for the minimal nilpotent orbit}

\author { Pavel Shlykov}

\address{Department of Mathematics, University of Toronto, Toronto, Ontario, Canada.}
\email{pavel.shlykov@mail.utoronto.ca}

\begin{abstract} We check that the statement of Hikita conjecture holds for the case of the minimal nilpotent orbit of a simple Lie algebra $\g$ of type ADE and $\C ^2 / \Gamma$.
\end{abstract}

\maketitle

\section{Introduction}

Symplectic duality is a hypothetical duality between conical symplectic singularities. At the moment, no rigorous definition exists, though there are a number of conjectured examples and a great number of connections that should tie together dual singularities. Namely, the Springer resolution is dual
to the Springer resolution for the Langlands dual group (\cite{BEGI}), hypertoric varieties are dual to other hypertoric varieties (\cite{TOR}),  finite ADE quiver varieties
are dual to slices in the affine Grassmannian for the Langlands dual group (\cite{ADE}) and many others. Not always it is known which symplectic resolution is dual to which, as the theory is a work in progress. The case we are interesed in is, however, known: the closure of the minilal orbit in a simply laced Lie algebra is dual to the Slodowy slice to the subregular orbit.

Now, if we have a pair of "dual" conical singularities, Hikita conjecture is a relation between the cohomology of one resolution and the coordinate ring of the other one. Namely, if $\widetilde{X}\rightarrow X$ and $\widetilde{X^*} \rightarrow X^*$ are a pair of dual conical symplectic resolutions and $T$ is a maximal torus of the Hamiltonian action on $\widetilde{X^*}$, there is an isomorphism between the cohomology ring of $\widetilde{X}$ and the coordinate ring of the fixed points scheme $(X^{*})^{T}$. Hikita observed it in his paper (\cite{HI}) for many of aforementioned cases and proved it for Hilbert scheme of points, finite type A quiver varieties and hypertoric varieties.

Let $\g$ be a simply laced simple Lie algebra. The closure of its minimal nilpotent orbit is expected to be dual to the Slodowy slice to the subregular orbit.

The Slodowy slice to the subregular orbit in a Lie algebra $\g$ is the same as $\C^2 / \Gamma$, where $\Gamma$ is a finite subgroup of $SL(2,\C)$ (corresponding to $\g$). It is a symplectic variety with rational double points. It admits a unique symplectic resolution $\widetilde{\C^2/\Gamma}$, given by the minimal resolution. The cohomology algebra of this resolution is known (we will prove it) to be $Sym^{\geq 2}[\h]$, where $\h$ is the abstract Cartan algebra of a Lie algebra $\g$, corresponding to $\Gamma$. 

The "dual" symplectic variety to it is given by the closure of the minimal nilpotent orbit in $\g$, or, equivalently (via the isomorphism), the  closure of the minimal orbit $\om$ in $\g^*$. We will work with the latter.

If we choose a generic action of $\C^*$, such that the fixed point scheme for it and for torus $T$ are same, Hikita conjecture for this pair of  singular symplectic varieties states that $$H^*(\widetilde{\C^2/\Gamma}) = \C [ \overline{\om}]^{\C^*}.$$

\section{}

We first prove the statement about the cohomology ring:
\begin{Lemma}
The cohomology ring of the resolution $H^*(\widetilde{\C^2/\Gamma})$ is equal to $Sym^{\geq 2}[\h]$, where $\h$ is the abstract Cartan algebra of $\g$. 
\end{Lemma}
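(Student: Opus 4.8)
The plan is to compute the cohomology ring $H^{*}(\widetilde{\C^{2}/\Gamma})$ by elementary topology and observe that it coincides with $Sym^{\geq 2}[\h]$: both sides are $\C$ in degree $0$ and an $n$-dimensional vector space in degree $2$, where $n=\operatorname{rk}\g=\dim\h$, with all products of positive-degree classes equal to zero. The first step is to reduce the computation to the exceptional fibre. Let $\pi\colon\widetilde{\C^{2}/\Gamma}\to\C^{2}/\Gamma$ be the minimal resolution and $E=\pi^{-1}(0)$ the fibre over the cone point. Since $\C^{2}/\Gamma$ is a cone it carries a $\C^{*}$-action contracting it to $0$; by functoriality of the minimal resolution this lifts to $\widetilde{\C^{2}/\Gamma}$, and properness of $\pi$ together with the valuative criterion forces $\lim_{t\to 0}t\cdot p$ to exist for every $p$ and to lie in $E$ (as $E$ is the $\C^{*}$-stable fibre over the fixed point $0$). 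Flowing along the real part of this action exhibits the inclusion $E\hookrightarrow\widetilde{\C^{2}/\Gamma}$ as a homotopy equivalence, so $H^{*}(\widetilde{\C^{2}/\Gamma})\cong H^{*}(E)$ as graded rings. (Alternatively one can invoke the ALE/Kronheimer description of $\widetilde{\C^{2}/\Gamma}$, or decompose it into a regular neighbourhood of $E$ and a collar on the link $S^{3}/\Gamma$.)

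The second step is the structure of $E$. By the classical theory of rational double points (Du Val, Artin, Brieskorn)---equivalently by the McKay correspondence for $\Gamma\subset SL(2,\C)$---$E$ is a reduced curve whose irreducible components are copies of $\P^{1}$ meeting transversally, with dual graph the Dynkin diagram of $\g$; in particular $E$ is connected, its dual graph is a tree, and it has exactly $n=\operatorname{rk}\g$ components. A short Mayer--Vietoris computation along the normalization (or a standard reference) then gives $H^{0}(E)=\C$, $H^{1}(E)=0$ (the dual graph being a tree), $H^{2}(E)=\C^{n}$ spanned by the classes $[C_{1}],\dots,[C_{n}]$ of the components, and $H^{k}(E)=0$ for $k\geq 3$ since $E$ has real dimension $2$. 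The ring structure is then forced: $H^{2}(E)\cdot H^{2}(E)\subseteq H^{4}(E)=0$ and $H^{1}(E)=0$, so every product of positive-degree classes vanishes.

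The last step is to identify the degree-$2$ part with $\h$. Under the McKay correspondence the components $C_{i}$ are indexed by the vertices of the Dynkin diagram of $\g$, i.e.\ by a choice of simple roots $\a_{1},\dots,\a_{n}$; since $n=\dim\h$, sending $[C_{i}]$ to the $i$-th basis vector of $\h$ gives a linear isomorphism $H^{2}(\widetilde{\C^{2}/\Gamma})\xrightarrow{\ \sim\ }\h$. As both $H^{*}(\widetilde{\C^{2}/\Gamma})$ and $Sym^{\geq 2}[\h]$ consist of $\C$ in degree $0$ together with a square-zero module of the same dimension in degree $2$, this extends to the asserted isomorphism of graded rings. (If one wants a canonical statement, the McKay/Kronheimer picture identifies $H_{2}$ with the coroot lattice and $H^{2}$ with the weight lattice of $\g$, so $H^{2}(\widetilde{\C^{2}/\Gamma};\C)\cong\h$ intrinsically; only the dimension count is needed here.)

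The two inputs that are not formal are the homotopy equivalence $\widetilde{\C^{2}/\Gamma}\simeq E$ and the ADE description of $E$, but both are classical, so I expect the bulk of the write-up to be a careful recollection of them; the cohomology computation and the matching with $Sym^{\geq 2}[\h]$ are then routine. The only point needing slight care is bookkeeping $\h$ versus $\h^{*}$, and root versus weight lattice, which is immaterial for the Lemma as stated.
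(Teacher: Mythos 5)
Your proposal is correct and follows essentially the same route as the paper: contract onto the exceptional fibre via the $\C^*$-action, use the ADE description of the fibre as a tree of $\P^1$'s, get $H^1=0$ and $H^2\cong\C^{\operatorname{rk}\g}\cong\h$ with all positive-degree products vanishing, and match this with the truncated symmetric algebra. The only (immaterial) difference is that you compute $H^*(E)$ by Mayer--Vietoris along the normalization and spell out the vanishing of products via $H^4(E)=0$, where the paper uses the decomposition of the tree into affine cells and points.
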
 
\begin{proof}

First, one notices that  on both $\C^2/\G$ and  $\widetilde{\C^2/\Gamma}$ there is an action of $\C^*$, that contracts the lower to the point zero. Thus, due to homotopy equivalence, we can restrict the computation of cohomology ring of $\widetilde{\C^2/\Gamma}$ to the fiber of the resolution over zero point of $\C^2/\G$. This is known to be a tree of $\P^1$'s that is a Dynkin diagram of the types ADE. 
Its cohomology ring can be seen to be given by $Sym^{\geq 2}[\h]$. 
First of all,  $\pi_1$ of it is clearly $0$, since we can retract every loop to a tree, formed by the points of intersection and connecting lines between them, and tree is contractible. We are left to deal with $H^2$. To do so, one can observe that our tree clearly has a decomposition into affine cells and dots, so we can obtain the generators for $H^2$ from each affine line. Since the number of $P^1$'s (and, thus, $\set{A}^1$'s) equals the $rk$ of corresponding Lie algebra $\g$ of type $ADE$, we thus obtain the cohomology ring, isomorphic to $Sym^{\geq 2}[\h]$, where $\h$ is the abstract Cartan algebra of $\g$. 

\end{proof}

Since the left part is known already what we are left to do is to check that the right hand side is the same. To do this it will be useful to simplify the problem as follows.

First of all one should notice that taking $\C^*$-invariants means the same as intersecting with the Cartan subalgebra -- $\overline{\om}^{\C^*}=\h \cap \overline{\om}$ as a scheme. So, instead of working with $\C^*$-invariant functions we can simply take the ideal of $\overline{\om}$ in $Sym[\g]$, look at the image of its projection in $Sym[\h]$ and factorize by it. The result of the factorization will be the ring we seek.

Now, let $\g$ be a simple Lie algebra, fix $\h$ a Cartan subalgebra and let $\overline{\om}$ denote the closure of the minimal nilpotent orbit in $\g^*$. The statement about the equality of algebras (with the reasoning above) will clearly follow from the following theorem.
\begin{Th}
Let I be the defining ideal of $\overline{\om}$ in $Sym[\g]$. Then its image under the projection 
$$Sym[\g] \twoheadrightarrow Sym[\h],$$
induced by the inclusion $\h^* \hookrightarrow \g^*$ is given by $Sym[\h]$ in degree $\geq 2$.
\end{Th}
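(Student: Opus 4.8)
The plan is to reduce the statement to the surjectivity of a single linear map on second symmetric powers, then to reformulate that as a transversality statement inside $Sym^2\g$, and finally to verify that transversality.

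\emph{Step 1 (reduction to degree two).} Let $\theta$ be the highest root, so that $\g\cong V_\theta$ as a $G$-module and $\overline{\om}$ is the affine cone over the adjoint variety $G\cdot[e_\theta]\subseteq\P(\g)$ in its minimal embedding. It is classical (Kostant; for the minimal orbit also Garfinkle) that such a cone is projectively normal and cut out by quadrics, so $I$ is generated by its degree-two component $I_2$, and $I_2$ is the $G$-stable complement of the Cartan component $V_{2\theta}\subseteq Sym^2\g$ (the submodule generated by $e_\theta\cdot e_\theta$, which occurs with multiplicity one). Moreover $I_0=I_1=0$, since $\overline{\om}$ contains the origin and is not contained in a hyperplane. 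Hence $\pi(I)$ is the ideal of $Sym[\h]$ generated by $\pi(I_2)\subseteq Sym^2[\h]$, while $\pi(I)\subseteq Sym^{\ge2}[\h]$ trivially; so the Theorem is equivalent to the surjectivity $\pi(I_2)=Sym^2[\h]$.

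\emph{Step 2 (a transversality reformulation).} The map $\pi$ is the algebra morphism induced by the projection $\g\twoheadrightarrow\h$ killing all root spaces, so in degree two it is the projection $Sym^2\g\to Sym^2[\h]$ with kernel $W_2\subseteq Sym^2\g$, the span of all products having a factor in some root space. Equipping $Sym^2\g$ with the symmetric bilinear form induced by the Killing form, one checks that $Sym^2\g=Sym^2[\h]\,\perp\,W_2$ (the two are orthogonal and the form is non-degenerate on each), hence $W_2^\perp=Sym^2[\h]$; similarly $V_{2\theta}^\perp=I_2$, since $V_{2\theta}$ has multiplicity one and the form is non-degenerate on it (for instance $\langle e_\theta^2,f_\theta^2\rangle\neq0$). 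Therefore $\pi(I_2)=Sym^2[\h]$ is equivalent to $I_2+W_2=Sym^2\g$, which, by passing to orthogonal complements, is equivalent to
$$V_{2\theta}\cap Sym^2[\h]=0\qquad\text{inside }Sym^2\g,$$
that is: no non-zero ``purely Cartan'' quadratic function lies in the Cartan square.

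\emph{Step 3 (the vanishing; the main obstacle).} It remains to prove this last statement, and I expect it to be the only real difficulty. An element $q\in V_{2\theta}\cap Sym^2[\h]$ has weight zero, hence lies in $(V_{2\theta})_0\cap Sym^2[\h]$ inside $(Sym^2\g)_0=Sym^2[\h]\oplus\bigoplus_{\gamma>0}\C\,e_\gamma f_\gamma$; writing a general element of $(V_{2\theta})_0$ as a combination of the weight-zero parts $v_\h^2+2\sum_{\gamma>0}v_\gamma v_{-\gamma}\,e_\gamma f_\gamma$ of the squares $v^2$ of elements $v\in\om$, the claim becomes that the off-diagonal coordinates $(v_\gamma v_{-\gamma})_{\gamma>0}$ already separate such combinations. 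I would try to deduce this uniformly from the interplay of the $\g$-action with this parametrization (applying raising operators to move a hypothetical $q$ toward $e_\theta^2$ and deriving a contradiction). Failing a uniform argument — and this already suffices for the present application, where $\g$ is of type $ADE$ — one can instead verify $\pi(I_2)=Sym^2[\h]$ directly by exhibiting explicit quadrics in $I_2$ whose restrictions span $Sym^2[\h]$: in type $A$ the $2\times2$ minors of the generic traceless matrix restrict to all products of two distinct diagonal entries, and in types $D$ and $E$ the analogous Plücker-type relations, respectively the known generators of the Joseph ideal, accomplish the same. Everything before this step is formal; the content lies in verifying the transversality above (equivalently, the spanning statement).
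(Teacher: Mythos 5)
Your Steps 1 and 2 are correct: the reduction to the single degree-two statement $\pi(I_2)=Sym^2[\h]$ (together with $I_0=I_1=0$), and the reformulation, via the Killing-form pairing on $Sym^2\g$, as the transversality $V_{2\theta}\cap Sym^2[\h]=0$, are both sound, and the multiplicity-one argument identifying $I_2=V_{2\theta}^{\perp}$ and $W_2^{\perp}=Sym^2[\h]$ is fine. (In fact generation of $I$ by quadrics is not even needed for the statement: $\pi(I_2)=Sym^2[\h]$ and $I_0=I_1=0$ already give $\pi(I)=Sym^{\geq 2}[\h]$, since $\pi$ is a surjective ring map and $I$ is an ideal.) The genuine gap is Step 3, which you acknowledge yourself: the vanishing statement, equivalently the spanning statement, is the entire content of the theorem, and you do not prove it. The ``uniform argument via raising operators'' is only a hope, and the fallback is carried out only in type $A$; for types $D$ and $E$ the appeal to ``Pl\"ucker-type relations'' and ``the known generators of the Joseph ideal'' is precisely the assertion that needs verification, not a verification --- knowing abstractly that $I_2$ is the complement of the Cartan component $V_{2\theta}$ does not tell you what its restriction to $\h$ is.

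The paper closes exactly this hole, uniformly in $\g$, by invoking Kostant's description of the quadratic generators of the ideal: they are the elements $C(v\cdot w)-2(\theta,\theta)\,v\cdot w$, where $C=\sum_i(e_i\otimes f_i+f_i\otimes e_i)+\sum_j h_j\otimes h_j$ is the Casimir element. Taking $v=w=h\in\h$, the root-vector part of $C$ sends $h\otimes h$ to terms each having a factor in a root space, hence into the kernel of $Sym[\g]\rightarrow Sym[\h]$, while the Cartan part acts by zero on $h\otimes h$; so the projection of this generator is a nonzero multiple of $h^2$, and the squares $h^2$, $h\in\h$, span $Sym^2[\h]$ by polarization. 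This single computation is what your proposal is missing; if you prefer to keep your orthogonality framework, the same computation proves your transversality statement through your own chain of equivalences, but some such explicit argument (or a genuinely performed case-by-case check in types $D$ and $E$) is indispensable. As it stands, your proposal establishes an equivalent reformulation of the theorem rather than the theorem itself.
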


\
Before moving to its proof we want to take a closer look at some simple special cases of this statement.
\begin{Ex}

Consider the easiest example possible: the case of $\sll (2)$. Since we have chosen $h$, we have both e and f and the nilpotent orbit (in this case there is only one nilpotent orbit apart from $0$) is given by the equation $h^2+ef=0$ (or, equivalently, by the ideal, generated by $h^2+ef$). This, after the projection to $Sym[\h]$ will give us $h^2$, which clearly generates the whole algebra in degree $\geq 2$.
\end{Ex}
\begin{Ex}

One more example would be the Lie algebra $\slln$ case. It is a bit more complicated -- at least there is be more than one nilpotent orbit. 

If a matrix belongs to the minimal nilpotent orbit its square is zero and its rank is 1. 
In terms of matrix equations such a matrix $A$ is given by the  $A^2=0, rk(A)=1$ -- every $2 \times 2$ minor should be zero and the matrix squared should be zero.
If we restrict those to the Cartan subalgebra we will get $a_{ii}^2=0$ (from the $A^2=0$) and $a_{ii}a_{jj}=0$ (from $det_{ij}=0$) which gives us all the functions in degree $2$ of $Sym[\h]$ for $ \gln $, thus in $\slln  $ too. 
\end{Ex}
\
\
\

To understand the theorem better we are going to use the following knowledge about adjoint representation of $\g$.
One should note that $\g^*$ is a representation of the type $V^*_{\theta}$ there $\theta$ is the highest weight for the adjoint representation. Moreover, $Sym^n[\g]$ can therefore be given as a sum $Sym^n[\g]= V(n\theta)\oplus L_n$ where $L_n$ stands for the sum of representations of lower weight. The fact is that the ideal we are interested in is actually given by the $\bigoplus_n L_n$. Indeed, every function from a representation of lower weight kills the highest weight vector $v_\theta$ from $\g^*=V^*(\theta)$. To find the generators of this ideal we can observe that it is actually given by  kernels of maps like $$V(n\theta) \otimes V(m\theta) \twoheadrightarrow V((n+m)\theta).$$
The objects of the form $V(n\theta)$ form a subring in the ring of all highest weight representations of our algebra $\g$.

The structure of generators of such a ring is given by the following theorem of Kostant (see \cite{E1}, \cite{LT}).

\begin{Th}
Let $\g$ be a semisimple Lie algebra and let $\Gamma_{\w_1}, \ldots, \Gamma_{\w_n}$ be irreducible representations corresponding to the fundamental weights. Let
$$A=Sym(\Gamma_{\w_1}, \ldots, \Gamma_{\w_n})$$.
This is a commutative graded algebra, and we can split it into pieces 
$$A^{a}=\bigoplus_{a_1,\ldots,a_n}Sym^{a_1}(\G_{\w_1})\otimes \ldots \otimes Sym^{a_n}(\G_{\w_n}),$$
where $a=(a_1, \ldots, a_n) $ is an $n$-tuple of nonnegative integers. $A^a$ then is the direct sum of the irreducible representation $\G_{\lambda}$, whose weight is given by $\lambda=\sum a_i\w_i$ and the sum $J^a$ of representations with strictly smaller highest weights. Thus, $J=\bigoplus J^a$ is an ideal in $A$, and it is generated by the elements of the form 
$$ \left ( \left [ \sum^n_{i=1}(x_i \otimes y_i+y_i \otimes x_i) \right ] - 2(\w_1, \w_2)Id \otimes Id) \right )   \cdot \  (v_1 \otimes v_2), $$
where $v_1 $and $v_2$ stand for vectors in 
$\G_{\w_1}$ and  $\G_{\w_2}$, and the sum  $\left [ \sum^n_{i=1}(x_i \otimes y_i+y_i \otimes x_i) \right ]$ denotes the Casimir element of the Lie algebra $\g$
\end{Th}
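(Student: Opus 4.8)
The plan is to prove the two assertions separately: first, the structural claim that $J=\bigoplus_a J^a$ is an ideal with $A/J\cong\bigoplus_\lambda\G_\lambda$ (the Cartan-product ring), and then the substantive claim that this ideal is generated by the displayed quadratic Casimir elements. For the structural part I would set up the Cartan-projection map. In each multidegree $a=(a_1,\dots,a_n)$ the highest weight of $A^a$ is $\lambda=\sum a_i\w_i$, its $\lambda$-weight space is one-dimensional (spanned by the monomial $v_{\w_1}^{a_1}\cdots v_{\w_n}^{a_n}$ in the highest weight vectors), so $\G_\lambda$ occurs with multiplicity one on top and everything of strictly smaller highest weight is $J^a$. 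Let $p\colon A\to R:=\bigoplus_\lambda\G_\lambda$ be the projection onto these top components. Since in the symmetric algebra $A$ the product of two such highest weight monomials is again the highest weight monomial of the next degree, $v_\lambda\cdot v_\mu=v_{\lambda+\mu}$, and since Cartan products are generated by highest weight vectors, $p$ is a surjective homomorphism of graded algebras (equivalently, the Cartan product is associative, checked on highest weight vectors inside a triple tensor product). Hence $J=\ker p$ is an ideal.

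Next I would pin down the quadratic pieces explicitly via the Casimir. Writing $\Omega=\sum_k x_k y_k$ and using $\Delta(\Omega)=\Omega\otimes 1+1\otimes\Omega+\sum_k(x_k\otimes y_k+y_k\otimes x_k)$, the mixed term $C_{ij}:=\sum_k(x_k\otimes y_k+y_k\otimes x_k)$ acts on the $\G_\sigma$-isotypic part of $\G_{\w_i}\otimes\G_{\w_j}$ by $(\sigma,\sigma+2\rho)-(\w_i,\w_i+2\rho)-(\w_j,\w_j+2\rho)$. On the Cartan component $\sigma=\w_i+\w_j$ this equals $2(\w_i,\w_j)$, while for every lower $\sigma=\w_i+\w_j-\beta$ with $\beta$ a nonzero sum of positive roots it is strictly smaller, since $2(\w_i,\w_j)$ minus that value equals $2(\sigma+\rho,\beta)+(\beta,\beta)>0$. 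Therefore $C_{ij}-2(\w_i,\w_j)\,\mathrm{Id}$ annihilates $\G_{\w_i+\w_j}$ and is invertible on every lower component, so its image is precisely $J^{(e_i+e_j)}$. As $v_1,v_2$ range over $\G_{\w_i},\G_{\w_j}$, the displayed elements $(C_{ij}-2(\w_i,\w_j)\mathrm{Id})(v_1\otimes v_2)$ thus span $J^{(e_i+e_j)}$, and the $Sym^2$-analogue handles $i=j$. Let $J_2\subseteq A$ be the ideal they generate; then $J_2\subseteq J$ and $J_2$ recovers every degree-two component of $J$.

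Finally I would prove $J\subseteq J_2$, i.e. that no relations beyond the quadratic ones occur, by induction on $|a|=\sum a_i$. Passing to $\bar A:=A/J_2$, which surjects onto $R$, it suffices to show $\dim\bar A^a\le\dim\G_\lambda$. The cases $|a|\le 2$ are settled by the previous paragraph. For $|a|\ge 3$ pick $i$ with $a_i>0$, write $a=b+e_i$; the multiplication $\bar A^{b}\otimes\G_{\w_i}\twoheadrightarrow\bar A^a$ together with the inductive identification $\bar A^{b}=\G_{\lambda-\w_i}$ exhibits $\bar A^a$ as a quotient of $\G_{\lambda-\w_i}\otimes\G_{\w_i}=\G_\lambda\oplus(\text{lower})$. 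The heart of the argument, and where I expect the main obstacle, is to show that every lower component of this tensor product already dies in $\bar A$, i.e. lies in the image of $J_2$ in degree $a$; this is exactly the genuine content of Kostant's theorem rather than a formal manipulation. Concretely I would use the Casimir operators $C_{ij}-2(\w_i,\w_j)\mathrm{Id}$ built into $\bar A$ to realize the full Cartan projection of a length-$k$ product as an iteration of pairwise projections, the key input being the strict monotonicity of the Casimir eigenvalue $(\sigma,\sigma+2\rho)$ established above together with associativity of multiplication in $\bar A$. The delicate point is this ``pairwise sufficiency'': that projecting two fundamental factors at a time suffices to compute the top component of an arbitrary product, which is precisely what rules out new higher relations (\cite{E1},\cite{LT}). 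Matching multiplicities then forces $\dim\bar A^a=\dim\G_\lambda$, closing the induction and yielding $\bar A\cong R$, hence $J_2=J$.
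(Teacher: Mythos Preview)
The paper does not prove this theorem; it is stated as a result of Kostant with references to \cite{E1} and \cite{LT} and then applied as a black box. There is therefore no proof in the paper to compare yours against.

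On the merits of your sketch: the first two steps are sound. The Cartan-projection map $p$ is an algebra homomorphism for the reason you give, so $J=\ker p$ is an ideal; and your Casimir computation is correct, so $C_{ij}-2(\w_i,\w_j)\,\mathrm{Id}$ kills the Cartan summand and is invertible on every lower isotypic of $\G_{\w_i}\otimes\G_{\w_j}$, whence the displayed elements span each degree-two piece of $J$.

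The third step is where the content lies, and you are right to flag it. The difficulty you identify is genuine: by induction $\bar A^{b}\cong\G_{\lambda-\w_i}$, and you need the lower summands of $\G_{\lambda-\w_i}\otimes\G_{\w_i}$ to die in $\bar A$ using only relations that live in pairwise products of \emph{fundamental} representations. Your proposed mechanism---iterating the pairwise Casimir projectors and invoking monotonicity of $(\sigma,\sigma+2\rho)$---is not yet an argument: monotonicity tells you the Cartan component is the top eigenspace in each pair, but it does not by itself explain why iterated pairwise top-projections on a long product compute the global top-projection (a lower summand of $\G_{\w_{i_1}}\otimes\G_{\w_{i_2}}$ can feed back into the top after tensoring with further factors). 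This ``pairwise sufficiency'' is exactly the nontrivial theorem, and the sources you cite establish it by other means (an explicit straightening procedure in \cite{LT}, or the flag-variety viewpoint behind the treatment in \cite{E1}). Your outline locates the gap correctly but does not close it.
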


\ 
 
Namely, for our case the theorem  says, that the kernel of the morphism $\mathfrak{g^*}\otimes \mathfrak{g^*}=V(\theta) \otimes V(\theta) \rightarrow  V(2\theta)$, is generated by the elements of the form $C(v \cdot w) - 2 (\theta, \theta)v \cdot w$, where C is the Casimir element. Since we are interested in the image of the projection of this subspace on the $Sym^2[\h]$ we can take vector $h \in \h$ for both $v, w$ and see what happens to it. Namely if we choose a basis $e_i, f_i, h_j$ in $\g$, Casimir element can be written as a sum 
$$C=\sum_i (e_i \otimes f_i +f_i \otimes e_i) + \sum_j (h_i \otimes h_i).$$ 
The first part of this sum sends the element $h\otimes h$ to a kernel of the projection $Sym[\g] \rightarrow Sym[\h]$, whereas the second part acts by zero. So, after projection we will be left with $2(\theta, \theta)h\otimes h$ and elements like this clearly generate all the  $Sym^2[\h]$. The latter, however, clearly generates all the algebra $Sym[\h]$ in degree $\geq 2$. This proves the theorem $(2.2)$ which, in turn,  confirms the initial hypothesis.

\section*{Acknowledgements}
The author wanted to thank Alexander Braverman for many helpful discussions and remarks and for gradual replacement of sawdust with long words in author's head.

\begin{bibdiv}
\begin{biblist}

\bib{LT}{article}{
title={Representation-functors and flag-algebras for the classical groups },
author={Lancaster, Glenn },
author={Towler, Jacob},

journal= {Journal of algebra},
volume={59},
date={1979},
pages={16-38}
}

\bib{E1}{book}{
title={Representation theory},
subtitle={A first course},
author={Fulton, William},
author={Harris, Joe},
date={1991},
publisher={Springer-Verlag}

}

\bib{HI}{article}{
title={An algebra-geometric realization of the cohomology ring of Hilbert scheme of points in the affine plane},
author={Tatsuyuki, Hikita},
journal={Int. Math. Res. Not. no. 8, 2538-2561},
date={2017}}

\bib{BEGI}{article}{
title={Koszul duality
patterns in representation theory},
author={Beilinson, Alexander},
author={Ginzburg, Victor},
author={Soergel, Wolfgang},
journal={ J. Amer. Math. Soc. 9 no. 2, 473-527.},
date={1996}
}

\bib{TOR}{article}{
title={Hypertoric
category
O},
author={Braden, Tom},
author={Licata, Anthony},
author={Proudfoot, Nicholas},
author={Webster, Ben},
journal={Adv. Math. 231 (2012), no. 3-4, 1487-1545}
}

\bib{ADE}{article}{
title={Quantizations of conical
symplectic resolutions I: local and global structure},
author={Braden, Tom},
author={Proudfoot, Nicholas},
author={Webster, Ben},
journal={Ast erisque (2016), no. 384,
1-73.}
}








\end{biblist}
\end{bibdiv}

\end{document}